\newtheorem{theorem}{Theorem}
\numberwithin{theorem}{section}
\newtheorem{proposition}[theorem]{Proposition}
\newtheorem{lemma}[theorem]{Lemma}
\newtheorem{definition}[theorem]{Definition}
\newtheorem{problem}[theorem]{Problem}
\newtheorem{remark}[theorem]{Remark}
\newtheorem{example}[theorem]{Example}
\newtheorem{conjecture}[theorem]{Conjecture}
\newcommand{\RR}{\mathbb{R}}
\theoremstyle{example}
\newtheoremstyle{example_contd}
{\topsep} {\topsep}%
{\upshape}% Body font
{}% Indent amount (empty = no indent, \parindent = para indent)
{\bfseries\scshape}% Thm head font
{.}% Punctuation after thm head
{1em}% Space after thm head (\newline = linebreak)
{\thmname{#1} \thmnumber{ #2}\thmnote{#3} (continued)}% Thm head spec
\theoremstyle{example_contd}
\newtheorem*{example_contd}{Example}
\DeclareMathOperator{\intr}{int}
\DeclareMathOperator{\conv}{conv}
  \date{}
\title{\textbf{Attainable Regions of Dynamical Systems}}
\author{Nidhi Kaihnsa}
\begin{document}
\maketitle

\begin{abstract} \noindent We present a mathematical definition for the attainable region of a
dynamical system, with primary focus on mass action kinetics for chemical
reactions. We characterise this region for linear dynamical systems, and
we report on experiments and conjectures for weakly reversible systems
with linkage class one. A construction due to Vinzant is adapted to give
a representation of faces in the convex hull of trajectories.

\end{abstract}
\let\thefootnote\relax\footnotetext{\noindent \footnotesize {\bf Author's address:} \smallskip \\
\noindent
Max Planck Institute for Mathematics in the Sciences,
Inselstra\ss e 22, 04103 Leipzig, Germany. \ \ \

\noindent Email address: \texttt{Nidhi.Kaihnsa@mis.mpg.de} }
\section{Introduction}
A chemical reactor is a system where a set of reactions and their 
mixing takes place. Chemical engineers are interested in finding the most cost-efficient reactor for a given chemical reaction. Fritz Horn in 1964 was the first to reduce this optimisation problem to that of 
finding the feasible set of the optimisation problem~\cite{Horn}. He called this feasible set the $\textit{attainable region}$ of a system. By definition, this region is the set of all realisable states of the reaction network in question with a certain starting point. Over the past half a 
century, the optimisation problem in the field of chemical reaction networks has 
been foremostly developed by Martin Feinberg \cite{Fei, Fei02}, Roy 
Jackson \cite{HJ}, David Glasser, and Diane Hildebrandt \cite{MGHGM}. More recently, the field of 
chemical reaction networks has gathered a lot of attention in the mathematics 
community \cite{Bor, CDSS, DJFN, JS}, and is a fast growing field. 
\medskip 

In this article, we formalise the definition of attainable regions and characterise them for some special systems using well known notions from algebraic geometry. To the best of the author's knowledge, this is first rigorous mathematical treatment of attainable regions. We aim for our contribution to help towards better understanding of the convex hulls of trajectories of dynamical systems.
\medskip

In the subsequent Section~2 we first set up the basic notation and define the attainable region for a general chemical reaction network. In Section~3 we then characterise the attainable region for linear systems. We show that for linear systems the convex hull of the trajectories are the attainable regions. In particular, we show that the feasible set of the reactor-optimisation problem for a class of linear systems can be expressed as the feasible set of a semidefinite program---using the language of algebraic geometry, the attainable region is a 
{\em spectrahedral shadow}. We then move on to Section~4 where we discuss a number of computational experiments on weakly reversible systems with a single linkage class, so chemical reaction 
networks given by a strongly connected digraph. These experiments enable us to formulate a new conjecture about attainable regions in the non-linear case.
The attainable region is a convex object and to understand this convex object we 
would like to understand its faces.
In Section~5, we used one such approach to 
understand the faces of the convex hull of the trajectories of 
weakly reversible systems with their convex body each in dimension 3, 4 and~5. The article ends with a discussion of our results and an outlook into future applications of this new rigorous treatment of chemical reaction networks. 

\section{Notation}

Throughout this article we follow the standard notation from chemistry and
denote \textit{chemical species} by $X_1, X_2, \dots, X_s$ for some $s\in \mathbb{N}$. Each of these 
species has a concentration $x_1(t), 
x_2(t), \dots, x_s(t) \in \RR^{\geq 0}$, respectively, at any time $t$ for $t\in [0,\infty)$. A {\em chemical complex} is a linear combination with non-negative integer coefficients of chemical species.
As defined in \cite{CDSS}, a \textit{chemical reaction network} (CRN) is a directed graph $G$ with vertex set 
$V=\{1,2,\dots, n \}$ and edge set $E \subseteq \{(i,j) \in V \times V \ : i\neq j
\}$. In such a graph the vertex 
$i\in V$ represents a chemical complex, and the edges indicate that a reaction takes place from one 
complex to the other. In addition, the edges are weighted by their reaction rates.

\begin{example}\label{eg} \rm
The following figure shows a network of chemical reactions.

\begin{center}

\schemestart 
    \subscheme{$X_2$ + 2$X_5$}
    \arrow(be--ac){<=>[$\kappa_5$][$\kappa_6$]}[135]
    \subscheme{$X_1$ + $X_3$}
      \arrow(@be--d){<=>[$\kappa_3$][$\kappa_4$]}[45]
    \subscheme{$X_4$}
    \arrow(@ac--@d){<=>[$\kappa_1$][$\kappa_2$]}
\schemestop
\end{center}
\bigskip
Here, $X_i$ are the species for $i\in\{1,2,\dots, 5\}$. The chemical complexes are
\{$X_2+2X_5$\}, \{$X_1 +X_3$\}, and \{$X_4$\}. The labels 
$\kappa_i$ for $i \in\{1,2,\dots, 6\}$ are the corresponding rates of reactions.
$\hfill \square$
\smallskip
\end{example}
Given such a reaction network, we are interested in the evolution of the 
concentrations of the species over time, dictated by the mass-action kinetics. 
 For $s$ species and $n$ complexes in a network, let henceforth $Y=(y_{ij})$ denote the  $n\times s$ 
matrix with the entry $y_{ij}$ being the coefficient of the $j$-th species in the $i$-th complex. We associate with the vertex $i$ of a CRN the monomial
$x^{y_i}=x_1^{y_{i1}}x_2^{y_{i2}}\cdots x_s^{y_{is}}$. This is a simple transformation of the linear combination defining complexes in the CRN which enables us to write
the dynamics for the mass-action kinetics as
\begin{equation}\label{eqn:ds}
\dot{x}=\frac{dx}{dt}=\Psi(x)\cdot A_{\kappa}\cdot Y
\end{equation}
where $\Psi(x)=\begin{bmatrix}x^{y_1}& x^{y_2}&\cdots &x^{y_n}
\end{bmatrix}$ and $A_{\kappa}
=(\kappa_{ij})$ is a matrix with $ij$-th 
entry given by the rates of reactions from the $i$-th complex to the 
the $j$-th complex for $i\neq j$ and $\sum_j \kappa_{ij}=0$ for all $i$. This matrix is 
the negative of Laplacian of the weighted digraph $G$. \\
\begin{example_contd}[\ref{eg}]
In the network illustrated in \cref{eg}, the monomials corresponding to the complexes are $x_1x_3\text{, }x_4\text{ and }x_2x_5^2$ and hence,

\begin{gather*}
\Psi(x)=\begin{bmatrix}x_1x_3&x_4&x_2x_5^2\end{bmatrix}\text{, }
A_\kappa=\begin{bmatrix}
-\kappa_1-\kappa_5	 & \kappa_1 					& \kappa_5 \\
 \kappa_2 					& -\kappa_2-\kappa_4	 & \kappa_4\\
 \kappa_6					 & \kappa_3 					& -\kappa_6-\kappa_3
 \end{bmatrix}\text{, }
 Y=\begin{bmatrix}
1&0&1&0&0 \\
0&0&0&1&0\\
 0&1&0&0&2
 \end{bmatrix}.
 \end{gather*}
Using the notation established, the dynamics of the above network is given by the system of ODEs below
\begin{equation}
\begin{split}
\dot{x_1}&=\frac{dx_1}{dt}=(-\kappa_1 - \kappa_5) x_1 x_3 + \kappa_2 x_4 + \kappa_6 x_2 x_5^2 \\
\dot{x_2}&=\frac{dx_2}{dt}=\kappa_5 x_1 x_3 + \kappa_4 x_4 + (-\kappa_3 - \kappa_6) x_2 x_5^2\\
\dot{x_3}&=\frac{dx_3}{dt}=(-\kappa_1 - \kappa_5) x_1 x_3 + \kappa_2 x_4 + \kappa_6 x_2 x_5^2 \\
\dot{x_4}&=\frac{dx_4}{dt}=\kappa_1 x_1 x_3 + (-\kappa_2 - \kappa_4) x_4 + \kappa_3 x_2 x_5^2 \\
\dot{x_5}&=\frac{dx_5}{dt}=2 (\kappa_5 x_1 x_3 + \kappa_4 x_4 + (-\kappa_3 - \kappa_6) x_2 x_5^2).
\end{split}
\end{equation}
\end{example_contd}

Let $y_j$ be the vector given by the $j$-th row of the matrix $Y$. Consider the linear subspace in $\RR^s$ spanned by $y_j-y_i$ whenever $(i,j)\in E.$ This space is called the \emph{stoichiometry subspace} and we will henceforth denote it by $P.$
 For a given dynamical system we always denote the initial value of the system at time $t=0$ by $x_0=x(0)\in \RR^s_{> 0}$. The trajectory that starts at $x_0$ stays in the affine subspace $(x_0 + P)\cap \RR^s_{\geq 0}$.
 We call a subset 
 $S\subset \RR^s$ 
 \textit{forward closed} subset if the initial condition $x_0\in S$ holds for 
 the dynamical system then all future values are contained in the subset $S$. In formulae, we thus have that $x_0 \in S$ implies $x(t)\in S$ for all $t\geq 0$. In particular, the non-negative orthant of $\RR^s$ is forward closed.\\
  
  In this 
 work, we aim to characterise all the 
possible sets of the species concentration attainable from the continuous reaction, 
according to the dynamics, and mixing of the concentrations of the species at all 
times. This approach to the reactor optimisation problem has been explored and discussed in \cite{MGHGM}. We approach this problem by building on a new mathematical definition of this attainable region, and we study these regions for various kinds of dynamical systems. 
\begin{definition}\rm
 The {\em attainable region}, $\mathcal{A}(x_0)$ is the smallest convex forward 
closed
subset of $\RR^s$ that contains the point $x_0$. 
\end{definition}
By construction, the attainable 
region is a convex subset in the closed positive 
orthant of real space 
$\RR^s$ of the chemical species. In the section that follows we first discuss the attainable regions of linear dynamical systems.

\section{Linear systems}

A dynamical system as in (\ref{eqn:ds}) is called $\textit{linear}$ when $n=s$ and $Y$ is the identity 
 matrix. In this case each of the complexes is a different single-unit species.

\smallskip
\begin{example}\label{eglin} \rm
The following graph illustrates the linear system of three species.

\begin{center}
\schemestart
    \subscheme{$X_3$}
    \arrow(be--ac){<=>[$\kappa_{13}$][$\kappa_{31}$]}[135]
    \subscheme{$X_1$ }
      \arrow(@be--d){<=>[$\kappa_{32}$][$\kappa_{23}$]}[45]
    \subscheme{$X_2$}
    \arrow(@ac--@d){<=>[$\kappa_{12}$][$\kappa_{21}$]}
\schemestop

\end{center}
 \bigskip
 
For the purpose of illustration, let now $\kappa_{12}=6,\kappa_{21}=1,\kappa_{32}=6,\kappa_{23}=1,
\kappa_{13}=3,\kappa_{31}=3$. From (\ref{eqn:ds}), we can express the 
dynamics of this system as
$$
\begin{bmatrix}
\dot{x}_1 &
\dot{x}_2 &
\dot{x}_3
\end{bmatrix}= 
 \begin{bmatrix}
x_1&
x_2&
x_3
\end{bmatrix}\cdot \begin{bmatrix}
 -9 & 6 & 3\\
 1 &-2 & 1\\
 3 & 6 & -9
 \end{bmatrix} 
$$ 

If $A_{\kappa}$ is diagonalisable, the solution to such a system is given by 
 \begin{equation}
 \label{eq:linsol}
{x(t)}=\sum_{k=1}^n ({x}_0 \cdot {r}_k){l}_k \exp(\lambda_k t)
 \end{equation} 
   where ${l}_k$ and ${r}_k$ are the left and right eigenvectors of $A_{\kappa}$ 
   corresponding 
   to eigenvalues $\lambda_k$, respectively,
 and 
 ${x}_0$ is the intial vector: for details see page 11 of \cite{CK}.\\
 This gives 
 $$
 \begin{bmatrix}
 x_1&
 x_2&
 x_3
 \end{bmatrix} =
 \begin{bmatrix}
 9/4e^{-8t}-3/2e^{-12t}+5/4\\ -9/2e^{-8t}+15/2\\ 9/4e^{-8t}+3/2e^{-12t}+5/4\\
 \end{bmatrix}^\top$$
with ${x}_0=\begin{bmatrix} 2 & 3 & 5 \end{bmatrix}$ as the starting 
 vector. 
For $t=0$,  we see that $\begin{bmatrix}
 x_1&
 x_2&
 x_3
 \end{bmatrix} = \begin{bmatrix} 2 & 3 & 5 \end{bmatrix}$ and as $t\rightarrow \infty$, this system continuously travels to 
 the stable point $\begin{bmatrix} 5/4 & 15/2 & 5/4 \end{bmatrix}$. On implicitizing the parametric equation in $t$, we obtain
\begin{equation}\label{eq:lintraj}
x_1+x_2+x_3-10 =0 \text{ and } 8x_2^3-99x_2^2+324x_2x_3+324x_3^2-270x_2-3240x_3+4725 = 0.
\end{equation}
 
These two equations fully describe the trajectory of the linear system from $x_0$ to the 
stable point on the plane cut out by $x_1+x_2+x_3-10 =0.$ A similar observation can be made for the system with a different starting point. We will prove later 
that the convex hull of this curve is the attainable region and this region can also be expressed as a so-called
spectrahedral shadow.
$\hfill \square$
\end{example}
 \smallskip
We henceforth denote by $C$ the solution of a dynamical system. This is the trajectory of the dynamics. In \cref{eglin} the trajectory is given by \cref{eq:lintraj} restricted from $x_0$ to the stable point.
The convex hull, $S=\conv (C)$, of $C$ is the smallest convex set in the concentration 
space $\RR^s$ containing the solution $C$. In the lemma below we can now show that for 
linear chemical reactions, the convex hull of the solution of the dynamics is 
forward closed. In words, for linear systems every point on any trajectory that starts with some point in the convex hull $S$ and follows the dynamics of the system is contained in this convex hull. 

\begin{lemma}\label{fc}
The convex hull of the trajectory of a linear dynamical system is forward closed.
\end{lemma}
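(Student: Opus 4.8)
The plan is to exploit the fact that for a linear system the time-$t$ flow is itself a linear map, and that linear maps commute with the operation of taking convex hulls. Concretely, when $Y$ is the identity we have $\Psi(x)=x$, so the dynamics \eqref{eqn:ds} reduces to the genuinely linear ODE $\dot{x}=x\,A_{\kappa}$, whose solution is $x(t)=x_0\exp(tA_{\kappa})$. For each fixed $t\geq 0$ I would introduce the flow map $\phi_t\colon\RR^s\to\RR^s$ defined by $\phi_t(v)=v\exp(tA_{\kappa})$; since right-multiplication by a fixed matrix is linear, $\phi_t$ is a linear transformation, and this is essentially the only structural property the proof will use.

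First I would record that the trajectory $C$ is forward invariant under the flow, that is, $\phi_t(C)\subseteq C$ for every $t\geq 0$. This follows from the semigroup property of the flow: a point of $C$ has the form $x(t_0)=\phi_{t_0}(x_0)$ with $t_0\geq 0$, and $\phi_t(x(t_0))=x(t_0+t)$, which again lies on $C$ because $t_0+t\geq 0$. Next, take an arbitrary point $p\in S=\conv(C)$. By Carath\'eodory's theorem I may write $p$ as a finite convex combination $p=\sum_i\lambda_i c_i$ with $c_i\in C$, $\lambda_i\geq 0$, and $\sum_i\lambda_i=1$. Applying $\phi_t$ and using its linearity yields $\phi_t(p)=\sum_i\lambda_i\,\phi_t(c_i)$. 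Each $\phi_t(c_i)$ lies in $\phi_t(C)\subseteq C\subseteq S$ by the previous step, so $\phi_t(p)$ is a convex combination of points of $C$ and therefore lies in $\conv(C)=S$. Since $p\in S$ and $t\geq 0$ were arbitrary, the trajectory starting at any point of $S$ stays in $S$, which is exactly the forward-closedness of $S$.

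The argument is short, and I expect the only point requiring any care to be the justification that the flow is linear; but this is immediate from the closed form $x(t)=x_0\exp(tA_{\kappa})$ and does not even require $A_{\kappa}$ to be diagonalisable, so the eigenvector solution formula \eqref{eq:linsol} is not strictly needed here. The conceptual heart of the proof, and the reason it is genuinely special to linear kinetics, is the interchange $\phi_t(\conv C)=\conv(\phi_t C)$: a linear (equivalently affine) flow sends a convex combination of states to the convex combination of the evolved states, and so it cannot push any point outside the convex hull. This identity fails for nonlinear dynamics, which is presumably why the later sections must rely on experiments and on an analysis of faces rather than on such a direct argument.
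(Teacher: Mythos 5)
Your proof is correct, and its core mechanism is the same as the paper's: a point of $S=\conv(C)$ is written as a convex combination $\sum_i\mu_i c_i$ of trajectory points, the solution operator is pulled through the sum by linearity, and each evolved $c_i$ stays on $C$ by the semigroup property, so the evolved point stays in $\conv(C)$. Where you differ is in the packaging of the solution operator, and your choice is cleaner: by working directly with the flow $\phi_t(v)=v\exp(tA_{\kappa})$, you get linearity of the time-$t$ map for free and need no structural assumption on $A_{\kappa}$. The paper instead splits into two cases --- when $A_{\kappa}$ is diagonalisable it expands the solution in left and right eigenvectors as in \eqref{eq:linsol} and redistributes the sum, and when it is not, it passes to the Jordan canonical form and repeats the computation block by block with $x(t)=x\,U^{-1}\exp(tJ)\,U$. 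Your observation that this case distinction is unnecessary is exactly right: the identity $\phi_t\bigl(\sum_i\mu_i c_i\bigr)=\sum_i\mu_i\,\phi_t(c_i)$ is all that is used in either case of the paper's proof, and it holds for the matrix exponential of an arbitrary $A_{\kappa}$. So your argument subsumes both branches of the published proof in one uniform step, and it also isolates the conceptual point (linear flows commute with convex combinations, which fails for nonlinear kinetics) more explicitly than the paper does.
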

\begin{proof}
Any point $c$ in the convex hull, $\conv (C)=S \subset \RR^s$, of the trajectory $C$ can be 
expressed as $c=\sum_i\mu_i c_i$, where $c_i$ are points on the trajectory, $
\mu_i\geq 0$, and $\sum_{i=1}^{s+1}\mu_i =1$ for 
$i \in  \{1,2,\cdots, s+1\}.$
First  let us consider the case where the Laplacian is diagonalisable as in \cref{eglin}. With $c$ as starting point, the new trajectory, as in (\ref{eq:linsol}), is given by 
\begin{equation}
 \label{eq:ls}
 {x(t)}=\sum_k \left(( \sum_i\mu_i c_i)\cdot {r}_k \right){l}_k 
 \exp(\lambda_k t)= \sum_i \mu_i \left(\sum_k ( c_i\cdot {r}_k ){l}_k 
 \exp(\lambda_k t)\right)
 \end{equation} 
is the convex sum of trajectories in $S$. Thus, $S$ is forward closed.

For the dynamical system $\dot{x}=x\cdot A_{\kappa}$ where $A_{\kappa}$ is not diagonalisable we perform a coordinate change by the matrix $U$ such that the matrix $UAU^{-1}$ is in its Jordan canonical form : see section 1.3 of \cite{CK}. 

It is enough to consider single Jordan block $J$. The solution of a single Jordan block form is given by $x(t)=x\text{ }U^{-1}\exp(tJ)\text{ }U$. Proceeding same as above with $c$ as the starting point 
\begin{equation}
\begin{split}
x(t)&=(\sum_i \mu_{i}c_i) \text{ }U^{-1}\exp(tJ)\text{ }U\\
&=(\sum_i \mu_{i}(x\text{ } U^{-1}\exp(t_iJ)\text{ }U))\text{ }U^{-1}\exp(tJ)\text{ }U\\
&=\sum_i \mu_{i}(x\text{ } U^{-1}\exp((t_i+t)J)\text{ }U)
\end{split}
 \end{equation} 
 This gives us that the convex hull of the trajectory of a linear dynamical system is forward closed.
\end{proof}

For the linear system with $x_0$ as the initial point, by \cref{fc} the attainable region $\mathcal{A}(x_0)$ is the convex hull of the trajectory. 

Next, we give a condition on the Laplacian of a linear reaction network for which the convex hull of the trajectory is a semi-algebraic set. A semi-algebraic set in $\RR^s$ is the solution set of finitely many polynomial inequalities as:
$\mathcal{S}=\{x\in \RR^s |\text{ } f_1(x)\geq 0, \ldots,f_n(x)\geq 0 \}$ where $f_i\in \RR[x_1,\ldots, x_s]$ for all $i\in {1,\ldots, n}$. These sets are very well understood objects in algebraic geometry and can sometimes be represented as a spectrahedral shadow \cite{Sch1}. 
A \textit{spectrahedral shadow} is a convex set
$S\subset \RR^m$ that can be expressed by a linear matrix inequality: 
$$
S=\{(x_1,x_2,\ldots,x_m)\in \RR^m| \text{ }\exists \text{ } (y_1,y_2,\ldots,y_p)\in \RR^p :A_0+\sum_ix_iA_i+
\sum_jy_jB_j \succcurlyeq 0\}
$$
where $A_0$, $A_i$ and $B_j$ are real symmetric $n\times n$ matrices for $i\in\{1,2,\ldots, m\}$, and $j\in\{1,2,\ldots, p \}$. We use the symbol $A\succcurlyeq 0$ to denote that the matrix $A$ is positive 
semidefinite. This is equivalent to $A$ having non-negative eigenvalues. In order to prove \cref{ss} we need the following useful fact on these semi-algebraic sets.

\begin{remark}\label{rem} \rm
Let $\phi:\RR^m\rightarrow\RR^n$ be an affine-linear map and $S\subset \RR^m$ be
a spectrahedral shadow. The linear image $\phi(S)\subset \RR^n$ is a spectrahedral shadow.
\end{remark} 
A spectrahedral shadow is a linear projection of the feasible set of a semidefinite 
program which is also called a $\textit{spectrahedron}$. Expressing the attainable region as a spectrahedral shadow has an 
advantage of getting good bounds for the 
optimisation of a linear objective function. 
 
 \begin{proposition}\label{ss}
 The convex hull of the trajectory of a linear chemical reaction network whose Laplacian has rational eigenvalues is a spectrahedral shadow.
 \end{proposition}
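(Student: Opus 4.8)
The plan is to transform the trajectory into the image, under a fixed linear map, of a polynomial moment curve over a compact interval, and then to invoke the moment-matrix description of the convex hull of such a curve. First I would use the hypothesis on the eigenvalues to linearise the exponentials into monomials. Assume first that $A_\kappa$ is diagonalisable, so that the solution has the form \cref{eq:linsol}, i.e.\ each coordinate is $x_i(t)=\sum_k c_{ik}\exp(\lambda_k t)$. Because $A_\kappa$ is the negative Laplacian of $G$, its rows sum to zero and its off-diagonal entries are nonnegative, so Gershgorin's theorem places every eigenvalue in a disk tangent to the imaginary axis at the origin; hence $\mathrm{Re}(\lambda_k)\le 0$, and the rationality hypothesis forces each $\lambda_k$ to be a nonpositive rational. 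Writing $\lambda_k=-n_k/q$ with a common denominator $q>0$ and integers $n_k\ge 0$, and substituting $u=\exp(-t/q)$ — a bijection of $t\in[0,\infty)$ onto $u\in(0,1]$, with $u\to 0$ recovering the steady state and $u=1$ the initial point $x_0$ — each $\exp(\lambda_k t)$ becomes the monomial $u^{n_k}$. Thus every coordinate is a polynomial $\phi_i(u)=\sum_k c_{ik}u^{n_k}$, and if $N=\max_k n_k$ we may factor $\phi=L\,v$, where $v(u)=(1,u,u^2,\dots,u^N)^\top$ is the standard moment curve and $L$ is a fixed real $s\times(N+1)$ matrix. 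Hence the closed trajectory is $C=L\cdot\mathcal M$, with $\mathcal M=\{v(u):u\in[0,1]\}$.

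Next I would pass to convex hulls. A linear map commutes with taking convex hulls, so $\conv(C)=L(\conv\mathcal M)$; by \cref{rem} it therefore suffices to prove that $\conv\mathcal M$ is a spectrahedral shadow. Now $\conv\mathcal M$ is precisely the set of truncated moment vectors $(1,m_1,\dots,m_N)$ with $m_i=\int_0^1 u^i\,d\mu$, as $\mu$ ranges over probability measures on $[0,1]$. Here the univariate geometry is favourable: by the classical truncated moment problem on an interval, a real sequence arises from a nonnegative measure on $[0,1]$ if and only if the associated Hankel moment matrix and the Hankel localising matrix attached to the constraint $u(1-u)\ge 0$ are both positive semidefinite, and in one variable these linear matrix inequalities cut out the moment set exactly, with no relaxation gap. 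Introducing the higher moments $m_{N+1},\dots,m_{2d}$ as auxiliary variables and imposing these two matrix inequalities defines a spectrahedron in $(m_1,\dots,m_{2d})$ whose projection onto $(m_1,\dots,m_N)$ equals $\conv\mathcal M$. Thus $\conv\mathcal M$, and therefore $\conv(C)=L(\conv\mathcal M)$, is a spectrahedral shadow, which settles the diagonalisable case.

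The main obstacle is the non-diagonalisable case, exactly the one that required separate treatment in \cref{fc}. When $A_\kappa$ has a nontrivial Jordan block the solution acquires terms of the form $t^{j}\exp(\lambda_k t)$, which under the substitution $u=\exp(-t/q)$ become $(-q\log u)^{j}u^{n_k}$; these are no longer polynomials in $u$, the curve $C$ ceases to be polynomial (indeed semialgebraic), and the moment-matrix argument no longer applies verbatim. I expect this to be the delicate point. One clean resolution is to observe that the argument above already covers every network whose Laplacian is diagonalisable — in particular the detailed-balanced and reversible systems, where $A_\kappa$ is conjugate to a symmetric matrix and hence has a real diagonalisable spectrum — and to handle the general rational case through a generalised moment, or Chebyshev-system, representation of the functions $\{t^{j}\exp(\lambda_k t)\}$, in the spirit of the curve construction due to Vinzant used in Section~5.

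A final point of care, even in the diagonalisable case, is to work throughout with the closed trajectory, adjoining the steady state at $u=0$ and the initial point at $u=1$, so that $\conv(C)$ is compact and coincides exactly with the (closed) projected spectrahedron rather than merely with its closure; this matches the closedness of the truncated moment cone over the compact interval $[0,1]$.
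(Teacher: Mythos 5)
Your proof is correct and shares the paper's overall skeleton---use rationality to turn the exponentials $e^{\lambda_k t}$ into powers of a single variable ranging over $[0,1]$, write the trajectory as a fixed linear image of that curve, and finish with \cref{rem}---but the key ingredient for the curve step is genuinely different. The paper keeps the exponents as positive rationals, regards $t \mapsto (t^{a_1},\dots,t^{a_n})$, $0\le t\le 1$, as a one-dimensional semialgebraic set, and invokes Theorem 6.1 of Scheiderer \cite{Sch}, which asserts that the closure of the convex hull of such a curve is a spectrahedral shadow. You instead clear denominators via $u=e^{-t/q}$, so the trajectory becomes a linear image of a segment of the standard polynomial moment curve, and then you use the classical exactness of the truncated Hausdorff moment problem on $[0,1]$ (positive semidefiniteness of the Hankel moment matrix together with the localizing matrix for $u(1-u)\ge 0$). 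Your route buys an explicit and elementary linear matrix inequality representation without citing Scheiderer's far more general theorem; its cost is that it genuinely needs integer exponents (hence the common-denominator trick) and that it is tied to the diagonalisable case. On that last point you are no worse off than the paper: its proof also presupposes diagonalisability, since it writes the trajectory as the image of the monomial curve under the matrix whose columns are $(({x}_0\cdot{r}_i){l}_i)^{\top}$, i.e.\ it relies on the eigenvector expansion \cref{eq:linsol}; the defective-Laplacian case is not treated there either, so your explicit flagging of it is a point of care rather than a gap relative to the paper. Two further details where your write-up is tighter: you adjoin the steady state so that the hull is compact and the representation is exact rather than exact only up to closure (the paper passes from Scheiderer's ``closure of the convex hull'' to ``convex hull'' without comment), and your Gershgorin argument pinning the rational eigenvalues to be nonpositive makes precise the sign convention that the paper leaves implicit when it calls the exponents $a_i$ positive rational numbers.
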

 \begin{proof}

 Consider a rational curve $C : I \longrightarrow \RR^n$ given by $t \mapsto 
 (t^{a_1},t^{a_2},\ldots,t^{a_n})$ in $\mathbb{R}^n$ over an interval $I \subset \RR
 $ where $a_i$ are positive rational numbers for $i\in \{1,2,\dots, n \}$.
 For $0\leq t\leq 1$ this is a semialgebraic set $S$ of dimension 1.
 By Theorem 6.1 in Claus Scheiderer's paper \cite{Sch}, the closure of convex hull of $S$ is a spectrahedral 
 shadow.
 
 If $a_i$'s are the rational eigenvalues of the Laplacian of a linear chemical reaction network then the trajectory of the dynamical system is the image of $S$ under the map
 $\phi : S\longrightarrow \mathbb{R}^s$ for $0\leq t\leq 1$, given by the matrix 
whose $i$-th column vector is given by the transpose of the row vector $(({x}_0 \cdot {r}_i){l}_i).$

 The convex hull of the trajectory of a linear chemical reaction network is the linear 
 image of convex hull of $S$ and 
 therefore, by \cref{rem} is a spectrahedral shadow.
  \end{proof}

 Using \cref{fc} and \cref{ss}, in the theorem below, we can characterise the
 class of linear system for which the attainable region is a spectrahedral shadow.

\begin{theorem}
The attainable region of linear chemical reaction networks whose Laplacian has rational eigenvalues is spectrahedral shadow.
\end{theorem}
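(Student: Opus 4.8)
The plan is to deduce the theorem directly from the two results already in hand, \cref{fc} and \cref{ss}. The crux is to verify that, for a linear system with initial point $x_0$, the attainable region $\mathcal{A}(x_0)$ coincides exactly with the convex hull $\conv(C)$ of the trajectory $C$. Once this identification is established, the hypothesis that the Laplacian has rational eigenvalues is precisely the hypothesis of \cref{ss}, which then yields the spectrahedral-shadow conclusion with no further work.

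First I would establish the inclusion $\mathcal{A}(x_0) \subseteq \conv(C)$. By \cref{fc} the set $\conv(C)$ is forward closed; it is convex by construction, and it contains $x_0$ because $x_0 = x(0)$ lies on the trajectory. Thus $\conv(C)$ is a convex, forward closed set containing $x_0$, and since $\mathcal{A}(x_0)$ is by definition the smallest such set, minimality gives $\mathcal{A}(x_0) \subseteq \conv(C)$. One should first remark that a smallest such set exists at all: the intersection of any family of convex forward closed sets containing $x_0$ is again convex, forward closed, and contains $x_0$, so $\mathcal{A}(x_0)$ is well defined as that intersection.

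For the reverse inclusion $\conv(C) \subseteq \mathcal{A}(x_0)$, I would argue that $\mathcal{A}(x_0)$ must contain the entire trajectory: it contains $x_0$ and is forward closed, so $x(t) \in \mathcal{A}(x_0)$ for all $t \geq 0$, that is, $C \subseteq \mathcal{A}(x_0)$. Since $\mathcal{A}(x_0)$ is convex, it contains the convex hull of any subset it contains, and in particular $\conv(C) \subseteq \mathcal{A}(x_0)$. Combining the two inclusions gives the equality $\mathcal{A}(x_0) = \conv(C)$.

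Finally, under the standing hypothesis that the Laplacian $A_\kappa$ has rational eigenvalues, \cref{ss} asserts that $\conv(C)$ is a spectrahedral shadow, and substituting the identity $\mathcal{A}(x_0) = \conv(C)$ completes the proof. I do not anticipate a genuine obstacle here, since the substantive work has already been done: \cref{fc} handles forward-closedness of the convex hull (including the non-diagonalisable Jordan-block case), and \cref{ss} reduces the convex hull of the rational-exponent curve to a spectrahedral shadow via Scheiderer's theorem. The only point requiring care is the two-sided inclusion above, where the forward-closedness of $\conv(C)$ supplied by \cref{fc} is exactly what prevents the minimal set $\mathcal{A}(x_0)$ from being a proper subset of $\conv(C)$.
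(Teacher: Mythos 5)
Your proposal is correct and follows essentially the same route as the paper: combining \cref{fc} and \cref{ss} via the identification $\mathcal{A}(x_0) = \conv(C)$. The paper states this identification without spelling out the two-sided inclusion, so your explicit minimality argument (and the remark that $\mathcal{A}(x_0)$ is well defined as an intersection) simply fills in details the paper leaves implicit.
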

\begin{proof}
From \cref{ss}, we know that the convex hull of the trajectory of a linear chemical 
reaction network whose Laplacian has rational eigenvalues is a spectrahedral 
shadow. Also, for linear dynamical systems the convex hull is forward closed by 
\cref{fc}. Therefore, the attainable region is the convex hull of the trajectory and 
is a spectrahedral shadow.
\end{proof}

For a linear system whose Laplacian has rational eigenvalues, we can hence obtain an exact expression of its attainable region as a spectrahedral shadow. This enables us to use powerful methods of real algebraic geometry to study the properties of these sets.

One future stream of research, which we will not pursue in this text, is an extension of the above result to linear systems whose Laplacian has real, rather than rational, eigenvalues. To the best of author's knowledge this is not yet known. A property of this type would be an important step towards understanding the attainable region of a general dynamical system.

\section{Weakly Reversible Chemical Reaction Networks}

Following \cite{CDSS}, a chemical reaction network is called {\em weakly reversible} if each connected component of the underlying connected graph is strongly connected. Following the usual terminology from graph theory, a directed graph is strongly connected if there is a directed path between any two of its vertices.
In this article we will restrict ourselves to weakly reversible systems whose underlying graph has only one strongly connected component. These are called {\em linkage class one} systems. For these systems we conjecture the following: 
\begin{conjecture}
For weakly reversible systems with linkage class one the convex hull of the trajectory reaching a positive stable point is forward closed.
\end{conjecture}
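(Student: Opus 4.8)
The obstruction that makes this harder than \cref{fc} is that the superposition used there---where a convex combination of initial states evolves into the same convex combination of trajectories---relies on linearity and fails for the polynomial vector field $f(x)=\Psi(x)\,A_\kappa\,Y$. Accordingly, the plan is to abandon superposition and instead verify forward invariance of the closed convex set $S=\conv(C)$ directly through a boundary (subtangentiality) criterion. Write $\gamma\colon[0,\infty)\to\RR^s_{\ge0}$ for the trajectory with $\gamma(0)=x_0$ and $\gamma(t)\to x^\ast$, let $\bar C=\gamma([0,\infty))\cup\{x^\ast\}$, and set $S=\conv(\bar C)$. Because the trajectory reaches the stable point, $\bar C$ is compact and hence $S$ is a compact convex body contained in the affine slice $x_0+P$, on which $f$ is smooth and tangent (every reaction vector lies in $P$). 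By Nagumo's theorem for closed convex sets, $S$ is forward closed if and only if for every boundary point $c\in\partial S$ and every outer normal $\nu\in N_S(c)$ one has $\langle\nu,f(c)\rangle\le0$; that is, the flow never points out through a supporting hyperplane.

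First I would dispose of the boundary points lying on the curve itself. If $c=\gamma(t)$, then the forward velocity $f(c)=\dot\gamma(t)=\lim_{h\to0^+}\bigl(\gamma(t+h)-\gamma(t)\bigr)/h$ is a limit of secant directions pointing from $c$ toward other points of $\bar C\subset S$; since the tangent cone $T_S(c)$ of a convex set is closed and contains all such directions, $f(c)\in T_S(c)$ automatically. Thus the exposed arc of the boundary requires nothing beyond the fact that future states of the trajectory stay on the trajectory.

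The substance is at boundary points $c$ lying in the relative interior of a proper face $F=S\cap\{\langle\nu,\cdot\rangle=h\}$, where $h=\max_{x\in S}\langle\nu,x\rangle$. By Milman's theorem the extreme points of $S$, hence the vertices $v_1,\dots,v_m$ of $F$, lie on $\bar C$; each is either an interior maximiser $\gamma(t_i)$ of $t\mapsto\langle\nu,\gamma(t)\rangle$, the endpoint $x_0$, or the equilibrium $x^\ast$. In every case $\langle\nu,f(v_i)\rangle\le0$: at an interior maximiser the $t$-derivative $\langle\nu,\dot\gamma(t_i)\rangle$ vanishes, at $x_0$ the one-sided derivative is $\le0$, and at $x^\ast$ we have $f(x^\ast)=0$. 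Writing $\phi_\nu(x):=\langle\nu,f(x)\rangle=\sum_{i=1}^n (A_\kappa Y\nu)_i\,x^{y_i}$, the desired inequality $\phi_\nu(c)\le0$ for all $c\in F$ would follow if $\phi_\nu$ were convex on $F$, since a convex function on a polytope attains its maximum at a vertex and every vertex value is $\le0$.

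The hard part, and the crux of the whole argument, is exactly this last step: showing that the signed combination of monomials $\phi_\nu=\sum_i (A_\kappa Y\nu)_i\,x^{y_i}$ is convex (or at least nonpositive) on each boundary face $F$ whose outer normal is $\nu$. Such convexity is false for arbitrary coefficients, so the proof must exploit the network structure. For a single, strongly connected linkage class the matrix $A_\kappa$ is the negative of an irreducible Laplacian, so by Perron--Frobenius (equivalently the Matrix--Tree theorem) its left kernel is one-dimensional and spanned by a strictly positive vector, which at a complex-balanced equilibrium is $\Psi(x^\ast)$. I would try to combine this positivity with the defining property of the face---$\langle\nu,\gamma\rangle$ is maximised precisely along $F$, which constrains the admissible sign pattern of the coefficients $(A_\kappa Y\nu)_i$---together with the global convergence to $x^\ast$ furnished by the pseudo-Helmholtz Lyapunov function, so as to force $\phi_\nu\le0$ on $F$. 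A complementary route worth attempting is to pass to logarithmic coordinates, in which each monomial becomes affine and the Lyapunov function becomes convex; the difficulty there is that $S$ and the tangent-cone condition live in concentration coordinates, so one must control how faces and normals transform under the nonlinear change of variables. I expect verifying this face inequality---rather than setting up the Nagumo reduction---to be where essentially all the work, and all the genuine use of weak reversibility, resides.
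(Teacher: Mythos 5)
This statement is a \emph{conjecture} in the paper: the author offers no proof at all, only the two-step numerical experiments of Section~4 as evidence. So there is no proof of record to compare you against; a complete argument here would settle an open problem, and your attempt does not do that --- it is a reduction plus an explicitly unproven core claim. The reduction itself is largely sound: compactness of $\bar C$, Nagumo's subtangentiality criterion on $\partial S$, the observation that curve points are harmless because forward secants lie in $S=\conv(\bar C)$, and the vanishing/sign conditions $\langle\nu,f(v)\rangle\le 0$ at extreme points (interior maximisers, $x_0$, and the equilibrium $x^\ast$) are all correct, modulo the minor repair that a face $F$ of the convex hull of a curve need not be a polytope, so you should invoke Bauer's maximum principle (a convex function on a compact convex set attains its maximum at an extreme point) rather than speak of finitely many vertices.

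The genuine gap is the step you yourself flag: nonpositivity of $\phi_\nu(x)=\langle\nu,f(x)\rangle$ on all of $F$, not just at its extreme points. Convexity of $\phi_\nu$ on $F$ is not merely unproven but implausible as a mechanism --- $\phi_\nu$ is an essentially arbitrary signed combination of the monomials $x^{y_i}$, and nothing in the face condition forces such a combination to be convex on $F$. Worse, the tools you propose to force the inequality are not available at the stated generality. Weak reversibility does \emph{not} imply complex balancing: the left kernel of $A_\kappa$ is indeed spanned by a positive vector (Matrix--Tree theorem), but the steady state guaranteed by Deng et al.\ and Boros satisfies only $\Psi(x^\ast)A_\kappa Y=0$, not $\Psi(x^\ast)A_\kappa=0$; the latter holds for all rate constants only in the deficiency-zero case. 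Consequently the pseudo-Helmholtz function is not known to be a global Lyapunov function here --- global convergence of trajectories for weakly reversible systems is itself the open Global Attractor Conjecture, so you would be propping up one conjecture with another. (Note also that the paper's conjecture quietly sidesteps this by \emph{assuming} the trajectory reaches a positive stable point.) As it stands, your proposal is a sensible program whose decisive step --- the face inequality --- is exactly the point where the paper's numerical evidence stops and where all the difficulty resides.
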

In order to provide the computational evidence for this conjecture we followed a two-step procedure, outlined below. All computations were performed using the freely available software $\mathtt{SAGE}$ \cite{Sg}.
\medskip
\paragraph{Step one} Given $n$ vertices, we generate a random 
digraph. This graph is usually not strongly connected. We then add edges randomly 
between the strongly connected components of the generated graph to make it 
strongly connected. To each 
vertex of the graph we associate a monomial in $s$ indeterminates upto 
a degree $d$. This represents the chemical complex at that vertex as introduced in Section~2. 
These monomials are the entries of a matrix $\Psi (x)$ and the powers in the monomials give the matrix $Y$ in (\ref{eqn:ds}).
We obtain the matrix $A_{\kappa}$ by assigning random positive edge 
weights. These three matrices now fully specify a random dynamical system for a weakly reversible CRN. 

We numerically integrate the obtained dynamical system in $\mathtt{SAGE}$ 
using the Runge-Kutta 4 method. In order for it to effectively integrate we keep the 
degree of monomials below~5. For higher 
values of $d$, one may use a higher order Runge-Kutta method for integration.
This integration gives us points that lie on the solution $C$ of the system. Because we want to make a statement about the convex hull of the trajectory, we 
now construct a polytope in dimension $s$ which is the convex hull of the points obtained. 
$\mathtt{SAGE}$ uses the cdd library for 
this.

In our computations, we computed $10,000$ points per trajectory. The tailing points 
are closer to each other than the initial points, so we tailored the set of points for 
which we compute the convex hull $S$. Using a random point $c$ in $S$ as the initial point, for the same system we integrate again to get a new set of points on the new trajectory $C^{\prime}$  and ask if  
$S$ contains 
the points on $C^{\prime}$. 
This was done for various trajectories in $\mathbb{R}^s$ for $s=2,3,4,5,6.$

During these computations we faced various challenges. Most of these pertained to 
the fact that the computations were numerical, and also, to the large number of 
points. In particular, the computations were not always feasible in dimensions higher than $s=6$. Computing the polytope becomes harder for a large set of points and this required us to tailor the set of points accordingly.

\paragraph{Step two} It was proven in \cite{DJFN} and elaborated upon again in \cite{Bor} that every weakly 
reversible chemical reaction network has at least one positive steady state. During our computations in step one, 
we observed all the systems to be converging to a steady state. Moreover, the trajectories 
starting from any interior point also converged to the $\textit{same}$ 
point. This may, however, be due to the fact that the random graph we generated almost always had single stationary point. This leads us to \cref{prob}.

 Since the computations were numerical, as the 
dimensions got higher it became difficult to compute the polytope for more than 
first 100 points. Therefore, in the 
second part of 
the computations, we attempted to double check
the points which in  step one of the computations were found to not be in the convex hull possibly due to error while integration or computing the convex hull of floating point numbers. 
The tailing points on $C^{\prime}$, although reported as not contained in $S$ for many instances, were found to be in the close range of some point 
on the starting trajectory. 
Secondly, since we had tailored our set of points we checked by changing the subset of points on $C$
for which we computed the convex hull. This new polytope reported in some instances to contain the points that 
were not contained in the first polytope.

From the various computations we have computational evidence, in at least lower dimensions, that for 
strongly connected graphs the convex hull of the trajectory is forward closed. These computations also compels us to ask the following question:

\begin{problem}\label{prob}
In a weakly reversible system with random edge weights and a random starting point, how likely is that the stoichiometry space has multistationary points?
\end{problem}
Or put differently, in the space of weakly reversible systems in given dimension $d$, how big is the space of systems that have multiple stationary points?
This seems a fairly hard question for a general weakly reversible systems and to 
date not much is known about this problem. Similar questions have been asked for 
one-dimensional stoichiometry space in \cite{JS}. In general, it would be useful to 
be able to characterise the systems that have multiple stationary points. Such a characterisation may give us 
insight into the systems where the convex hull of trajectories is not forward closed 
and the attainable region is greater than the convex hull.

\section{Facial Structure}
In the previous section, we conjectured that for chemical reaction networks given by strongly connected graphs, the attainable region is the convex hull of the trajectory. To understand this object using convex algebraic geometry it is imperative to study its {\em faces}. For parametrized curves, one such approach was suggested by Cynthia Vinzant in Section 5.2 of her PhD dissertation \cite{Vin}.  We give the details of this below.

Let $C$ be a parametrized curve given by $
\textbf{g}=(g_1(t), \dots , g_m(t))$ for $t\in \mathcal{D}$. Here, $\mathcal{D}\subseteq \mathbb{R}$ is a closed interval and $g_i(t)$ are univariate polynomials in $t$ for $i\in \{1,2,\ldots ,m\}$.
The $r$-th {\em face-vertex set} Face($r$) of the curve $C$ is defined to be 
$$\{(d_1,\dots, d_r)\in \mathcal{D}^r | \text{ } \textbf{g}(d_1),\dots , \textbf{g}(d_r)  
\text{ are the vertices of a face of the convex hull of $C$} \}.$$

For $p\leq r$, let $\{d_1,\ldots ,d_p\}\in \intr (\mathcal{D})$ be interior and 
$\{d_{p+1},\ldots ,d_r\}\in \partial\mathcal{D}$ be the boundary points. As $d_i$ varies in $\mathcal{D}$, the face-vertex set Face($r$) 
is always contained in the variety cut out by 
\begin{equation}\label{eq:facevset3}
\text{minors}\left( n+1,\begin{pmatrix}
               \,  1  & \ldots &    1  &    0  &  \ldots  & 0   \\
                \, \textbf{g}(d_1)  & \ldots &  \textbf{g}(d_r)  &   \textbf{g}^{\prime}(d_1)  &  \ldots  &   \textbf{g}^{\prime}(d_p)  
               
\end{pmatrix}
\right).
\end{equation}
This describes a variety in $\mathcal{D}^r$ that contains the set Face($r$) for the convex hull of $C$. 

In this section, we apply this approach to the dynamical systems and illustrate them in the examples below. This method has not been previously used to understand the convex hulls. Note that for any curve $C$, it is true that if $c_1,\ldots , c_r$ are points on the curve such that they define vertex set of some face of the convex hull of $C$ then
\begin{equation}\label{eq:facevset2}
\text{minors}\left( n+1,\begin{pmatrix}
               \,  1  & \ldots &    1  &    0  &  \ldots  & 0   \\
             \, c_1  & \ldots &  c_r  &  c^{\prime}_{1}   &  \ldots  &   c^{\prime}_{p}  
               
\end{pmatrix}
\right)
\end{equation}
 vanish where $c^{\prime}_{i}$ denote the tangent vector at that point. We will exploit this fact and give representation of the faces.

\bigskip
In our case, we only had points on the curve and this makes it difficult to express faces as a variety. For a curve in $s$ dimension we were able to look at the following cases:
\begin{itemize}
\item Face($\frac{s+1}{2}$) if $s$ is odd.
\item Face($\frac{s}{2}+1$) if $s$ is even.
\end{itemize}

The above two conditions make the matrix in (\ref{eq:facevset2}) a square matrix and the corresponding faces are then given by the vanishing of the determinant.
We used the software $\mathtt{Mathematica}$~\cite{Mat} to plot the sign of the determinant for all combinations of points on the curve. We illustrate this for curves in dimensions 3, 4 and 5 below. These curves are given by the ODE's which satisfy the condition in the following lemma due to \cite{HT}.
\medskip
\begin{figure}[t]
\centering
\begin{minipage}{.5\textwidth}
  \centering
 \includegraphics[width=.4\linewidth]{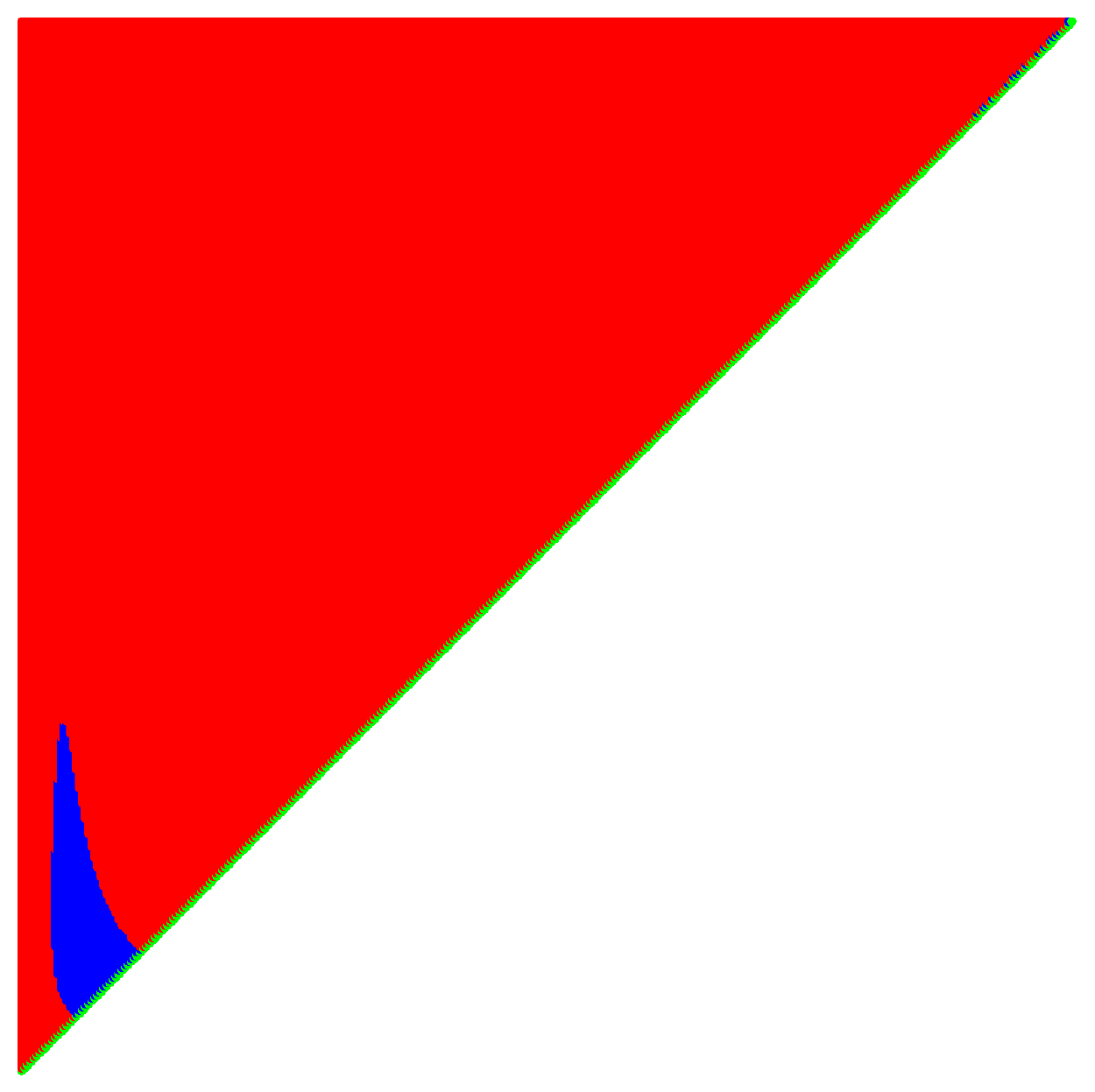}
  \captionof{figure}{Face(2) of a curve in 3-space.}
  \label{Fig:test1}
\end{minipage}%
\begin{minipage}{.5\textwidth}
  \centering
 \includegraphics[width=.4\linewidth]{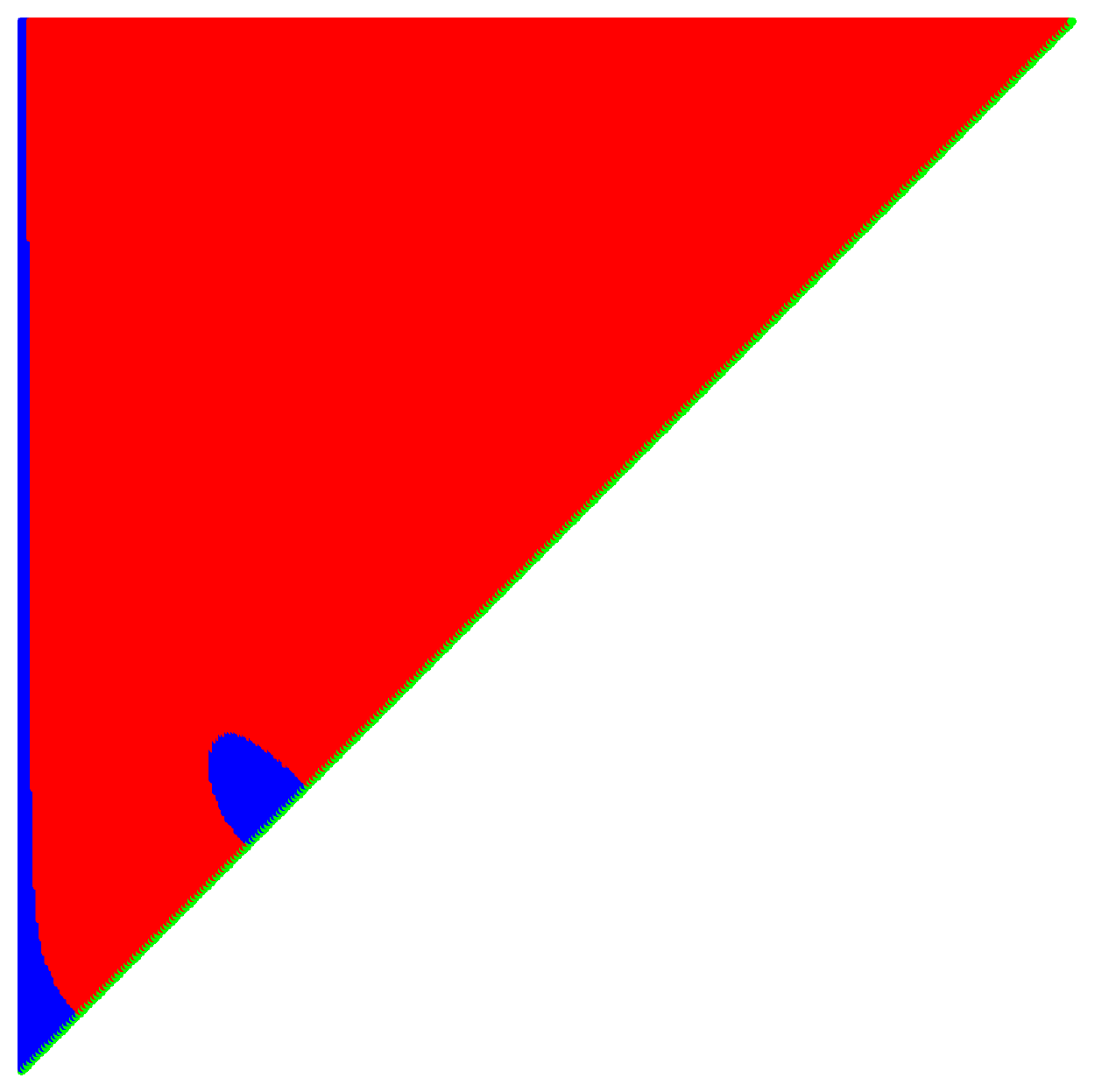}
  \captionof{figure}{Face(3) with initial point as one vertex of the 3-face for a curve in 4-space.}
  \label{fig:test2}
\end{minipage}
\end{figure}
\begin{lemma}\rm
A dynamical system $\dot{\textbf{x}}=\textbf{f(x)}$ where each $f_i$ is a polynomial in $s$ variables arises from a CRN with mass-action kinetics if and only if every monomial in $f_i$ with negative coefficient is divisible by $x_i$ for all $i\in\{1,2,\ldots,s\}.$
\end{lemma}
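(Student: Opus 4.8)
The plan is to prove both implications directly from the explicit form of the mass-action dynamics in \cref{eqn:ds}. Writing the dynamics reaction-by-reaction, for a CRN the $i$-th coordinate of $\dot{\mathbf{x}}=\mathbf{f}(\mathbf{x})$ reads
\[
\dot{x}_i=\sum_{(a,b)\in E}\kappa_{a\to b}\,x^{a}\,(b_i-a_i),
\]
where I write $a$ and $b$ for the exponent vectors of the source and target complexes of a reaction. Since distinct complexes carry distinct monomials, grouping by the source monomial $x^{a}$ shows that the coefficient $c_{i,a}$ of $x^{a}$ in $f_i$ equals $\sum_{b:\,(a,b)\in E}\kappa_{a\to b}(b_i-a_i)$.

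For the forward direction I would use that all rate constants $\kappa_{a\to b}$ are positive and that every exponent of a complex satisfies $b_i\ge 0$. If the coordinate $a_i$ vanishes, then each summand $\kappa_{a\to b}(b_i-a_i)=\kappa_{a\to b}\,b_i\ge 0$, so $c_{i,a}\ge 0$. Contrapositively, if $c_{i,a}<0$ then $a_i\ge 1$, i.e.\ $x_i\mid x^{a}$. Thus every monomial with negative coefficient in $f_i$ is divisible by $x_i$, which is exactly the stated condition.

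For the converse I would construct a realizing CRN. Collect the coefficients into reaction vectors $v_a=(c_{1,a},\dots,c_{s,a})$, so that $\dot{\mathbf{x}}=\sum_a x^{a}v_a$. Take each exponent vector $a$ occurring in the system as a source complex, and for every coordinate $i$ prescribe a reaction as follows: if $c_{i,a}>0$, add the reaction $a\to a+\mathbf{e}_i$ with rate $c_{i,a}$; if $c_{i,a}<0$, add the reaction $a\to a-\mathbf{e}_i$ with rate $-c_{i,a}$; and if $c_{i,a}=0$, add nothing. The crucial point is that $a-\mathbf{e}_i$ is a genuine complex, i.e.\ a nonnegative integer vector: this is precisely where the hypothesis enters, since $c_{i,a}<0$ forces $x_i\mid x^{a}$ and hence $a_i\ge 1$. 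Each such reaction contributes $\kappa\,x^{a}(\pm\mathbf{e}_i)=c_{i,a}\,x^{a}\mathbf{e}_i$ to $\dot{\mathbf{x}}$, so summing over $i$ recovers $x^{a}v_a$ and summing over $a$ recovers $\mathbf{f}$. It then remains to check that the prescribed data form a valid weighted digraph: all rates are positive, none of the reactions is a self-loop (the displacement $\pm\mathbf{e}_i$ is nonzero), and no two prescribed reactions share the same ordered pair of endpoints, so there are no parallel edges.

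The heart of the argument is the converse construction, and the one step that genuinely uses the hypothesis is recognizing that the divisibility condition is exactly what guarantees the decremented target $a-\mathbf{e}_i$ has nonnegative entries and therefore is a legitimate chemical complex; without it the natural "synthesis/degradation" scheme would call for complexes with negative exponents, which are not allowed. Both implications are otherwise short, reducing to the nonnegativity of complex exponents together with positivity of the rate constants.
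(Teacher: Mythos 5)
Your proof is correct and complete. There is, in fact, nothing in the paper to compare it against: the lemma is stated without proof and attributed to H\'ars and T\'oth \cite{HT}, the classical reference on the inverse problem of reaction kinetics, so your write-up supplies what the paper leaves to the literature. Your argument is, in substance, the classical one. The forward direction is the standard computation: writing $\dot{x}_i=\sum_{(a,b)\in E}\kappa_{a\to b}\,x^{a}(b_i-a_i)$, which agrees with the form $\Psi(x)\cdot A_\kappa\cdot Y$ in (\ref{eqn:ds}), and using positivity of the rates together with $b_i\ge 0$ shows that the coefficient of any monomial $x^{a}$ with $a_i=0$ is nonnegative --- the contrapositive of the claim. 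The converse is the standard ``canonical mechanism'': each signed term $c_{i,a}\,x^{a}\mathbf{e}_i$ is realized by a synthesis reaction $a\to a+\mathbf{e}_i$ with rate $c_{i,a}$ or a degradation reaction $a\to a-\mathbf{e}_i$ with rate $-c_{i,a}$, and the divisibility hypothesis is exactly what makes $a-\mathbf{e}_i$ a legitimate complex, i.e.\ a nonnegative integer vector; you correctly isolate this as the one place the hypothesis enters. Your final checks (positive rates, no self-loops, no parallel edges) confirm that the construction yields a weighted digraph as in Section~2, whose mass-action dynamics reproduce $\mathbf{f}$. The only implicit convention worth flagging is your assertion that distinct complexes carry distinct monomials: this is the standard assumption that distinct vertices of a CRN are labelled by distinct complexes, and even without it the forward direction survives, since collecting equal monomials only adds terms that are individually nonnegative when $a_i=0$.
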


By this lemma there exists a chemical reaction network for each of the systems in the examples below.

\begin{example}\rm
Consider the following system with initial point as $x_{0}=(10,8,9,2)$,
\begin{equation}
\begin{split}
\dot{x_1} & =-2x_1^2 - 6x_1x_4 + 10x_3x_4 \\
\dot{x_2} & =x_1^2 - 8x_2x_3\\
\dot{x_3} & =x_1^2 + 6x_1x_4 - 9x_3x_4 \\
\dot{x_4} & = 8x_2x_3 - x_3x_4.
\end{split}
\end{equation}

The solution of this system lies in stoichiometry 
subspace of dimension 3 and hence, the convex hull has dimension 3. To find the curve of Face(2),
we consider the matrix given by
\begin{equation}\label{eq:facevset4}
\begin{pmatrix}
               \,  1  &     1 &   0  &   0   \\
             \, c_{3i}  &   c_{3j}  & c^{\prime}_{3i}   &    c^{\prime}_{3j}  
               
\end{pmatrix}
\end{equation}
as in (\ref{eq:facevset2}) for $i,j\in \{2,3, \ldots , 2000 \}$ and $i\leq j$. We plot this in \cref{Fig:test1} where blue and red represents that the sign of the determinant is negative and positive, respectively. The separating boundary of the red and blue area represents the Face(2) of this system.
$\hfill\square$
\end{example}

\bigskip
Next, we consider a curve with a 4-dimensional convex body.

\begin{example}\rm
Consider the following system with initial point as $x_{0}=(5,8,6,2)$,
\begin{equation}
\begin{split}
\dot{x_1} & =-10x_1^2 + 12x_2x_3 + 6x_3^2 + 4x_3x_4 - 5x_1 \\
\dot{x_2} & =2x_1^2 - 8x_2x_3 + x_1\\
\dot{x_3} & =8x_1^2 - 8x_2x_3 - 6x_3^2 + 5x_1 \\
\dot{x_4} & = - 8x_3x_4 + 4x_1.
\end{split}
\end{equation}

For this system we consider the representation of faces that has initial point as always one of the vertex. This is given by considering the matrix in \cref{eq:facevset6} with the initial point as the boundary point. The boundary of the red and the blue area in \cref{fig:test2} gives the curve describing the Face(3) of the system such that every point on this curve represents the face of the convex hull such that initial point is one of the three vertices of that face.
\begin{equation}\label{eq:facevset6}
\begin{pmatrix}
               \,  1  &     1 &1 &    0  &   0   \\
             \, c_{3i}  &   c_{3j}  &x_0&  c^{\prime}_{3i}   &    c^{\prime}_{3j}  
               
\end{pmatrix}
\end{equation}
and for $i,j\in \{2, \ldots , 2000 \}$ and $i\leq j$.
$\hfill\square$
\end{example}
\begin{figure}[t]
\begin{center}
\vspace{-0.2in}
  \includegraphics[width=3.5cm]{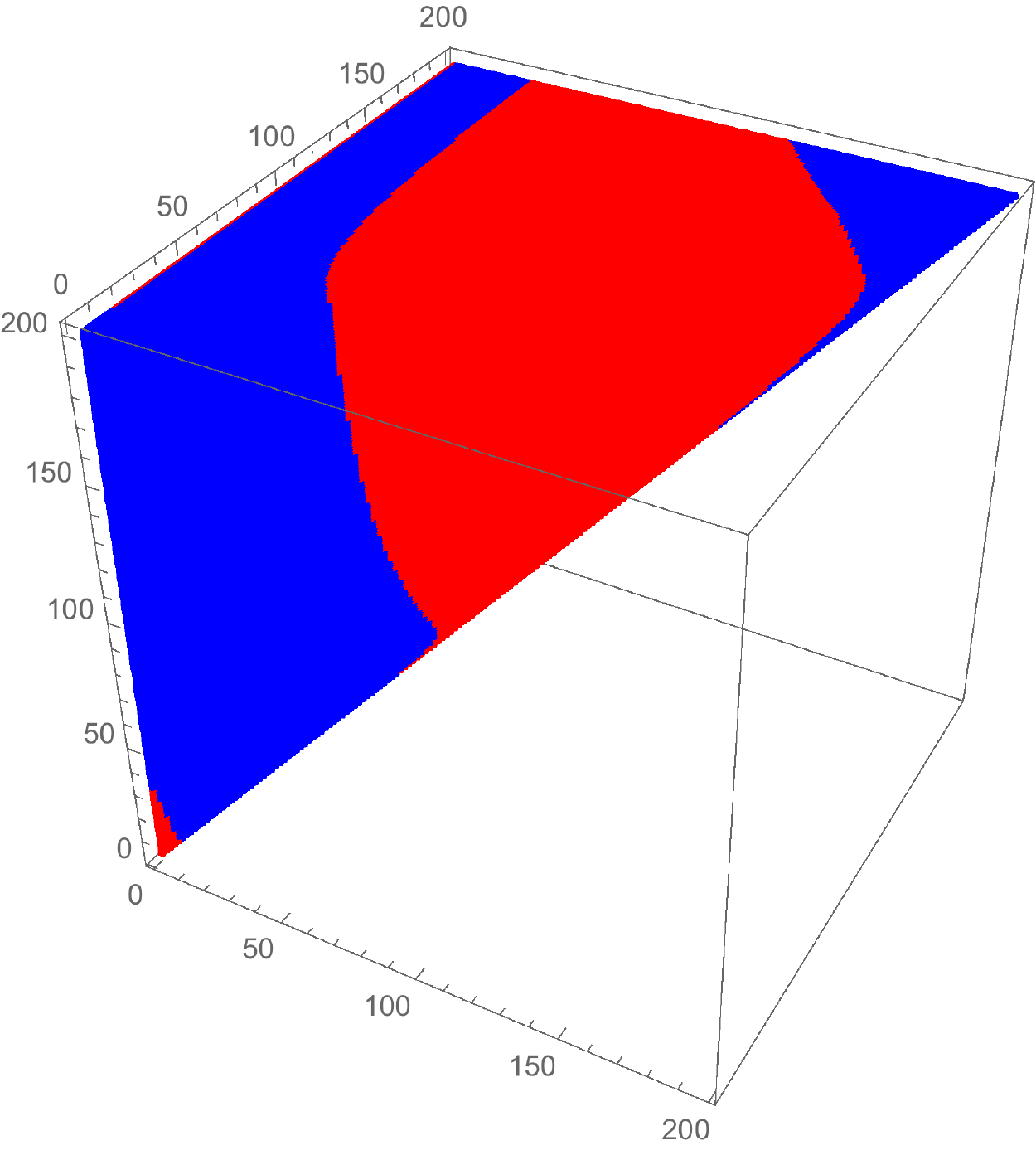}\hspace*{2.3em}
  \includegraphics[width=3.5cm]{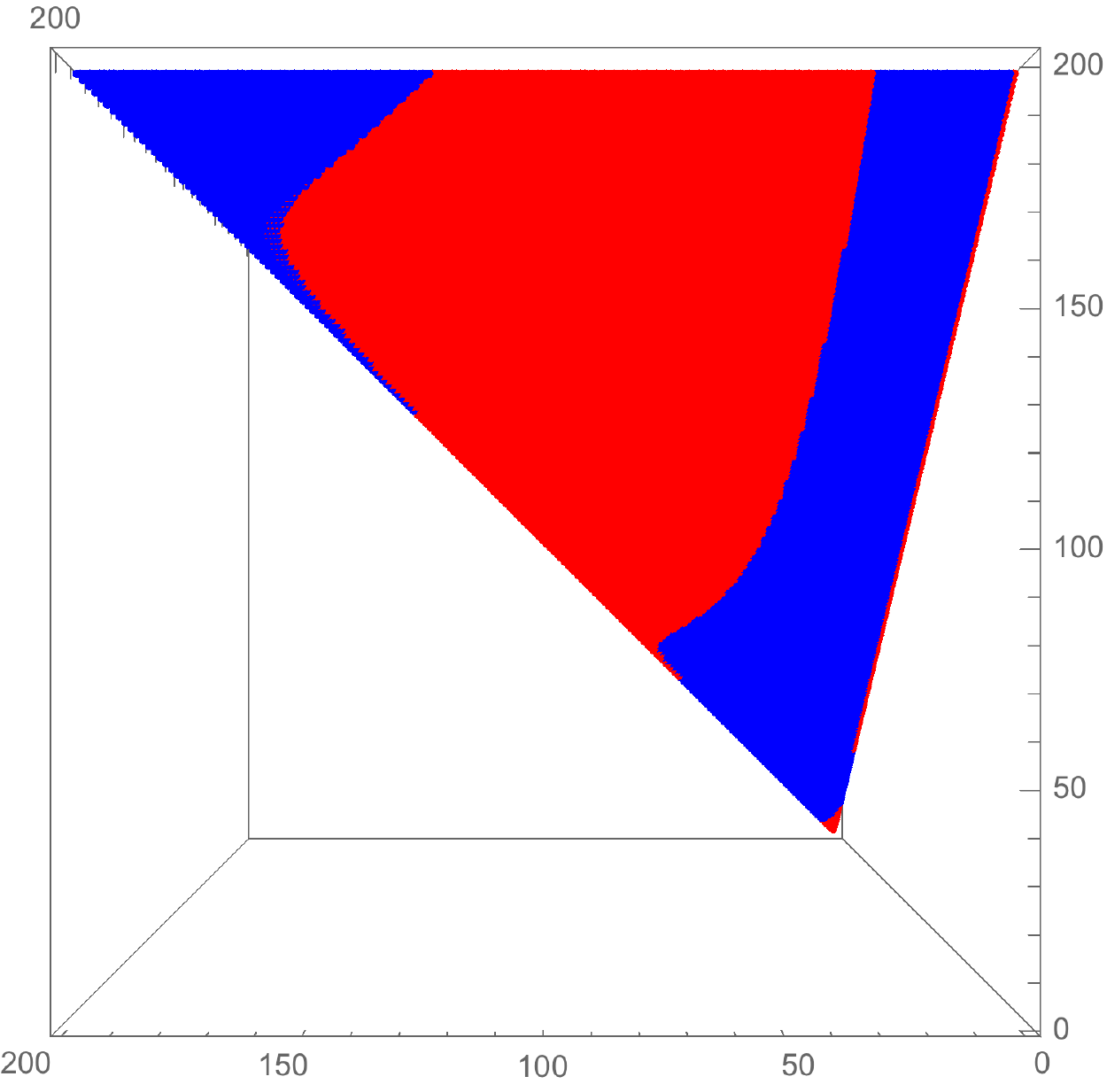}\hspace*{2.3em}
  \includegraphics[width=3.5cm]{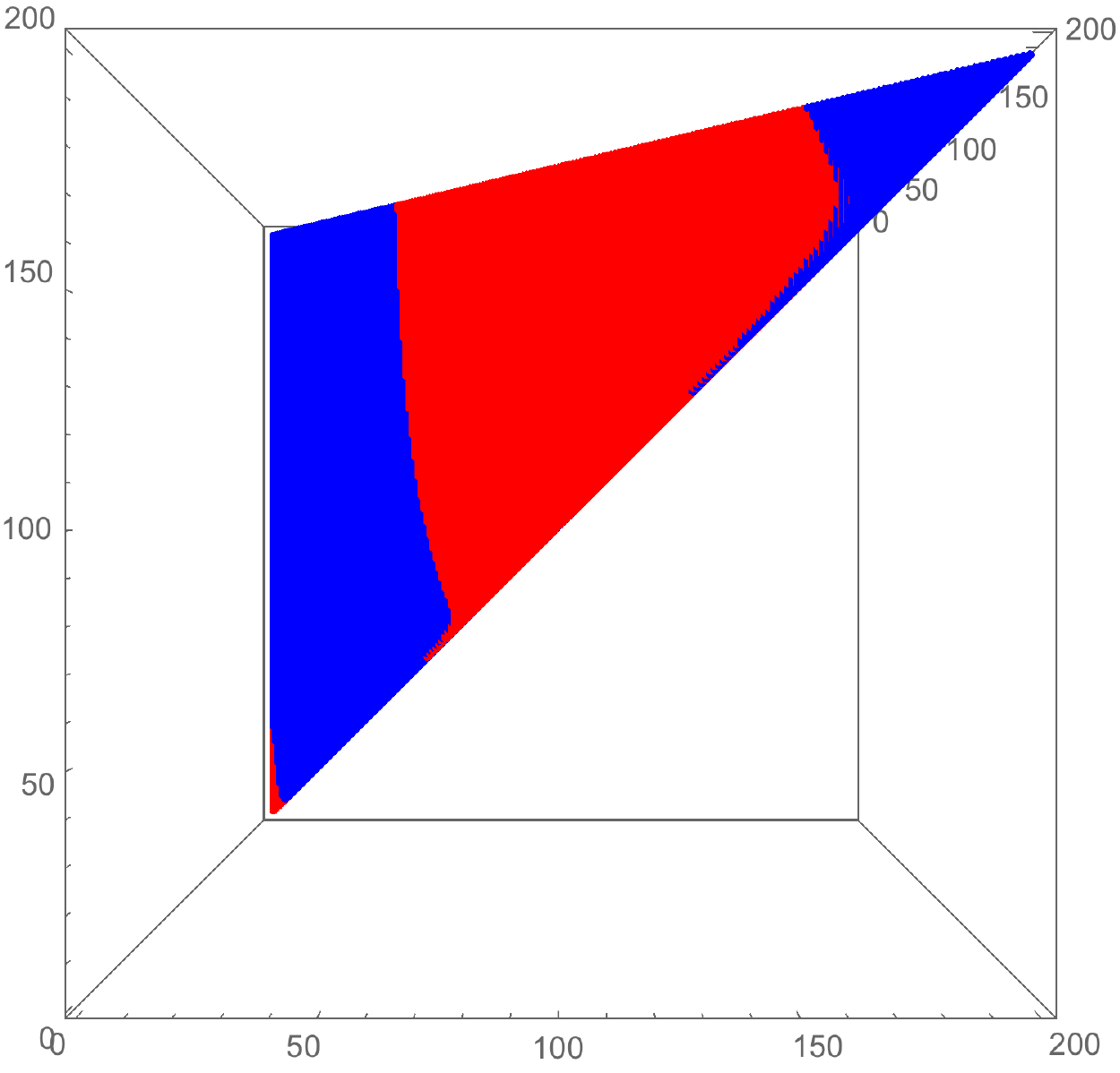}
\vspace{-0.3in}
\end{center}
    \caption{\label{fig:drei} 
Face(3) of a 5-dimensional convex body}
         \end{figure}
The following example will depict the Face(3) of a trajectory in 5-dimensions.
\begin{example}\rm The system given by
\begin{equation}
\begin{split}
\dot{x_1} & =4x_3x_4x_6 - 8x_1x_6^2 + 2x_2^2 + 4x_3x_5 \\
\dot{x_2} & =-10x_2^2x_4 + 4x_3x_4x_6 + 4x_1x_6^2 - 12x_2^2 + 6x_6^2\\
\dot{x_3} & =5x_2^2x_4 - 6x_3x_4x_6 + 6x_1x_6^2 - 4x_3x_5 + 2x_6^2 \\
\dot{x_4} & =-4x_2^2x_4 - 4x_3x_4x_6 + 2x_1x_6^2 + 2x_6^2\\
\dot{x_5} & =4x_2^2x_4 + 4x_1x_6^2 - 4x_3x_5 \\
\dot{x_6} & = x_2^2x_4 + 2x_3x_4x_6 - 14x_1x_6^2 + 12x_2^2 + 8x_3x_5 - 8x_6^2
\end{split}
\end{equation}
has stoichiometry space of dimension 5. The \cref{fig:drei} shows the sign of determinant of 
\begin{equation}
\begin{pmatrix}
               \,  1  &     1 &1 &    0  &   0  &0 \\
             \, c_{5i}  &   c_{5j}  &c_{5k}&  c^{\prime}_{5i}   &    c^{\prime}_{5j} &    c^{\prime}_{5k}  
               
\end{pmatrix}
\end{equation}
for $i,j,k\in \{2,\ldots , 200\}$ and $i<j<k.$
$\hfill\square$
\end{example}

Using this adaptation 
for understanding the convex hulls is not sufficient. This approach when 
applied to the trajectories could not be used to give a representation of all the 
faces and therefore, for the curves coming from a dynamical system this adaptation could not 
give a general description. Clearly, there are some rich veins of research here which can be pursued much further.

\section{Discussion}

This work is motivated by an optimisation problem in chemistry, namely the one of finding the most cost-efficient reactor. It is of great interest for industrial chemists to find the optimum reactor while improving the reaction efficiency. The feasible set of this problem is a convex object. Since this region is a geometric object this problem lies on the interface of chemistry and mathematics. The formalism that we have established in this paper now provides the basis to describe and explore the properties of these convex sets coming from chemistry, using the language of algebraic geometry and characterise them. For certain linear systems we could express this 
convex object as a spectrahedral shadow. However, by results due to Claus Scheiderer in \cite{Sch1} it is not 
possible to express every convex object as a spectrahedral shadow. 

There are a number of intriguing new stream of research coming out of our analysis. We conjectured that attainable region of weakly reversible systems with linkage class one is the convex hull of the trajectory. In the future, we hope to work towards resolving this conjecture and give a representation of 
the same. One possible way of tackling this problem could be via an approximation of this region by 
the semidefinite representable sets. As a second step, it would also be very interesting to study the 
systems where the attainable region is larger than the convex hull of the trajectory. In particular, 
understanding the attainable regions of the systems with multistationary points 
may prove to be insightful. Giving a representation by way of studying the faces is yet another interesting problem for convex hulls coming from such trajectories.
\bigskip

\begin{small}
\noindent
{\bf Acknowledgements.}
The author would like to express her gratitude to Bernd Sturmfels for suggesting the problem, and providing valuable advice and support along the way. She is grateful to Christiane G$\ddot{\text{o}}$rgen for feedback and useful comments on the draft of the manuscript. She is thankful to Amir Ali Ahmadi, Anne Shiu and Cynthia Vinzant for their help and useful discussions.
The author was funded by the
International Max Planck Research School {\em Mathematics in the
  Sciences} (IMPRS). 
\end{small}

 \begin{small}
 
 \end{small}


\begin{thebibliography}{10}
 
 \setlength{\itemsep}{-0.5mm}
\bibitem{Bor} B.~Boros:
 {\em Existence of positive steady states for weakly reversible mass-action 
 systems}, arXiv:1710.04732.

\bibitem{CK} F.~Colonius, W.~Kliemann:
 {\em Dynamical Systems and Linear Algebra}, Graduate Studies in Mathematics, 
 American Mathematical Society, 2014.
 
 
 \bibitem{CDSS} G.~Craciun, A.~Dickenstein, A.~Shiu and B.~Sturmfels:
 {\em Toric dynamical system}, J. Symbolic Comput., {\bf 44 (11)}, 1551–-1565 (2009)

 \bibitem{DJFN} J.~Deng, C.~Jones, M.~Feinberg, A.~Nachman:
 {\em On the steady states of weakly reversible chemical reaction networks}, 	
 arXiv:1111.2386. 
 
  \bibitem{Fei} M.~Feinberg:
 {\em Lectures on Chemical Reaction Networks. Notes of lectures given
at the Mathematics Research Center of the University of Wisconsin in 1979}, 
http://www.che.eng.ohio-state.edu/∼FEINBERG/LecturesOnReactionNetworks.

  \bibitem{Fei02} M.~Feinberg:
 {\em Toward a theory of process synthesis}, 
 Ind. Eng. Chem. Res., {\bf 41 (16)}, 3751--3761 (2002).
 
 \bibitem{HT}V.~H$\acute{\text{a}}$rs , J.~T$\acute{\text{o}}$th: {\em On the inverse problem of reaction kinetics}, Colloquia Mathematica Societatis
J$\acute{\text{a}}$nos Bolyai 30. Qualitative Theory of Differential Equations. Szeged (Hungary), 363--379(1979).

  \bibitem{Horn} F.~Horn:
 {\em Attainable and non-attainable regions in chemical reaction technique}, 
 Chemical Engineering Science, {\bf 20}, 293 (1965).
 

  \bibitem{HJ} F.~Horn, R.~Jackson:
 {\em General mass action kinetics}, Arch. Ration. Mech. Anal.,
{\bf 47(2)},81–-116, (1972).

\bibitem{JS} B.~Joshi, A.~Shiu:
 {\em Which small reaction networks are multistationary?}, SIAM J. Appl. Dyn. Syst., {\bf 16(2)}, 802--833 (2017).

 \bibitem{MGHGM} D.~Ming, D.~Glasser, D.~Hildebrandt, B.~Glasser, and 
 M.~Metzger:
 {\em Attainable Region Theory: An Introduction to Choosing an Optimal Reactor}, 
 John Wiley and Sons, Inc., New Jersey, 2016.
 
  \bibitem{Sg}SageMath, the Sage Mathematics Software System (Version 7.6.0),
   The Sage Developers, 2017, http://www.sagemath.org. 
  
  \bibitem{Sch1} C.~Scheiderer:
 {\em Spectrahedral Shadows}, 
 SIAM J. Appl. Algebra Geometry, {\bf 2(1)}, 26–-44 (2018).
 
 \bibitem{Sch} C.~Scheiderer:
 {\em Semidefinite representation for convex hulls of real algebraic curves}, 
 SIAM J. Appl. Algebra Geometry, {\bf 2(1)}, 1–-25 (2018).
 
  \bibitem{Vin} C.~Vinzant:
 {\em Real Algebraic Geometry in Convex Optimization}, PhD Thesis,
University of California, Berkeley, Spring 2011.
 \bibitem{Mat}{Wolfram Research{,} Inc.}, Mathematica, {V}ersion 11.1.1.0, {Champaign, IL, 2017}








 \end{thebibliography}
\end{document}